\documentclass[12pt]{article}
\usepackage{amsmath, amssymb, amsfonts, amstext, amsthm, textcomp, enumerate}
\usepackage[mathscr]{euscript}
\newtheorem{thm}{Theorem}[section]
\newtheorem{rem}[thm]{Remark}
\newtheorem{ex}[thm]{Example}

\topmargin -.1in
\textwidth 140 true mm \textheight 220 mm

\begin{document}

\begin{center}
\vspace{3cm}
 {\bf \large When is $(A+B)^{\dagger}=A^{\dagger}+B^{\dagger}$?}\\
\vspace{3cm}

{\bf K.C. Sivakumar}\\
Department of Mathematics\\
Indian Institute of Technology Madras\\
Chennai 600 036, India. \\
\end{center}

\begin{abstract}
We address the question as to when it is true that $(A+B)^{\dagger}=A^{\dagger}+B^{\dagger},$ where $\dagger$ denotes the Moore-Penrose inverse. A similar question is addressed for the group inverse.

\end{abstract}

{\bf AMS Subject Classification (2010):} 15A09.

{\bf Keywords:} Moore-Penrose inverse, group inverse, inverse of the sum of matrices.\\ 

\newpage
\section{Introduction}
The motivation for this short note is the work of \cite{bomuhl}, where the authors seek to solve the equation $\frac{1}{a+b}=\frac{1}{a}+\frac{1}{b}$ not only for reals or complex numbers, but also for matrices with real entries; in the first case there is no solution, while in the second and third cases, it is shown that there are infinitely many solutions. Specifically, they show that $(A+B)^{-1}=A^{-1}+B^{-1}$ holds for real matrices of order $n$, if and only if $n$ is even and describe a method of constructing such matrices. Recall that a real vector space $V$ is said to have a complex structure if there is a linear operator $J$ on $V$ such that $J^2=-I$. It is easy to observe that a finite dimensional real vector space admists a complex structure if and only if its dimension is even. In \cite{dan}, it is shown that the identity above holds in a finite dimensional vector space $V$ if and only if $V$ admits a complex a structure, thereby obtaining the same conclusion as in \cite{bomuhl}, as a consequence. Here, we consider a more general question of asking when the identity above extends to generalized inverses. More precisely, we present {\it sufficient conditions} on (possibly) rectangular matrices $A$ and $B$ with complex entries such that the equation $(A+B)^{\dagger}=A^{\dagger}+B^{\dagger}$ holds, where $\dagger$ stands for the Moore-Penrose inverse (see Remark \ref{where}). We also consider the case of the group inverse. 

\section{Preliminaries}
The symbol $\mathbb{C}^{m \times n}$ denotes the set of all complex matrices of order $m \times n$. For $A \in \mathbb{C}^{m \times n},$ we use $R(A)$ to denote its range space and $N(A)$ to denote its null space. For any matrix $X$ with complex entries, $X^*$ denotes the conjugate transpose. Let us recall that for $A \in \mathbb{C}^{m \times n},$ the Moore-Penrose (generalized) inverse (or the pseudo inverse) of $A$, denoted by $A^{\dagger}$ is the unique matrix $X \in \mathbb{C}^{n \times m}$ that satisfies the equations $AXA=A, XAX=X, (AX)^*=AX$ and $(XA)^*=XA$. One of the many ways of showing the existence of the Moore-Penrose inverse is by using the full-rank factorization. A matrix $A \in \mathbb{C}^{m \times n}$ is said to have a full-rank factorization if there exist $F \in \mathbb{C}^{m \times r}$ and $G \in \mathbb{C}^{r \times n}$ such that $rank(F)=rank(G)=rank(A)=r$ and $A=FG$. It then follows that $A^{\dagger}=G^*(GG^*)^{-1}(F^*F)^{-1}F^*$. In fact, in this case, one has $F^{\dagger}=(F^*F)^{-1}F^*$ and $G^{\dagger}=G^*(GG^*)^{-1},$ so that $F^{\dagger}$ is a left inverse of $F$, while $G^{\dagger}$ is a right inverse of $G$. More generally, for any matrix $X$, one has the formulae: $X^{\dagger}=(X^*X)^{\dagger}X^*=X^*(XX^*)^{\dagger}$. The following properties will be frequently used: $R(A)=R(AA^{\dagger})$ and $R(A^*)=A^{\dagger}A=R(A^{\dagger})$. In particular, it follows that if $x \in R(A)$, then $AA^{\dagger}x=x$, which may be extended to the idea that if $R(B) \subseteq R(A)$ then $AA^{\dagger}B=B$. Similarly, $A^{\dagger}A$ acts like identity on $R(A^*)$. 

For the reader who is encountering the Moore-Penrose inverse for the first time, here is a motivation: For the linear system $Ax=b$, given $A \in \mathbb{C}^{n \times n}$ and $b \in \mathbb{C}^n$ with $A$ nonsingular, one has $x=A^{-1}b$ as the unique solution. Now, consider the system $Ax=b$, given $A \in \mathbb{C}^{m \times n}$ and $b \in \mathbb{C}^m$. Set $x^0=A^{\dagger}b$. If the system has a unique solution, then $x^0$ is that unique solution; if it has infinitely many solutions, then $x^0$ is the solution that has the additional property that it has the least (euclidean) norm, among all the solutions; if the system does not have a solution, but has a unique least squares solution, then $x^0$ is that solution, and finally, if the system is not consistent and has infinitely many least squares solution, then $x^0$ is the unique least squares solution with the least norm. 

Another generalized inverse, this time for square matrices, is recalled next. Let $A \in \mathbb{C}^{n \times n}$. If there exists $X \in \mathbb{C}^{n \times n}$ such that $AXA=A, XAX=X$ and $AX=XA$, then such an $X$ must be unique and is referred to as the group inverse of $A$. It is denoted by $A^{\#}$. A necessary and sufficient condition for the existence of the group inverse is the condition that $rank(A^2)=rank(A)$, which is of course, the same as $R(A^2)=R(A)$, which in turn, is equivalent to the condition $N(A^2)=N(A)$. The nomenclature for the group inverse comes from the fact that the set consisting of $A$ and its positive powers, $A^{\#}$ and its positive powers, forms a group under matrix multiplication, where $AA^{\#}$ is the identity element and $A^{\#}$ is the inverse of $A$. It is useful to note that the group inverse of $A$, if it exists, is a polynomial in $A$. Once again, a formula for the group inverse may be given in terms of a full-rank factorization: if $A=FG$ is a full-rank factorization, then $A^{\#}$ exists if and only if $GF$ is invertibe. In that case, one also has $A^{\#}=F(GF)^{-2}G.$ Analogous to the Moore-Penrose inverse, one has: $R(A)=R(AA^{\#})=R(A^{\#})$. If $R(B) \subseteq R(A)$ then $AA^{\#}B=B$. 

It may be emphasized that while the Moore-Penrose inverse exists for all matrices, the group inverse of a given matrix need not exist. For instance, no nilpotent matrix possesses the group inverse. Of course, if $A$ is square and nonsingular, then one has $A^{\dagger}=A^{\#}=A^{-1}$.

The following result will also be useful.

\begin{thm}\cite{bengre}\label{grp}
For $C \in \mathbb{C}^{n \times n}$, let $X$ be a matrix satisfying $$Xp=0 \Longleftrightarrow Cp=0$$
and $$Xp=q \Longleftrightarrow Cq=p \ \ for ~all \ \ p,q \in R(C). $$ 
Then $X=C^{\#}.$
\end{thm}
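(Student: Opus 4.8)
The plan is to establish three facts and then combine them: (i) the stated hypotheses on $X$ already force the group inverse $C^{\#}$ to exist; (ii) $C^{\#}$ itself satisfies these hypotheses; (iii) the hypotheses determine a matrix uniquely. Then $X=C^{\#}$ follows at once.

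For (i), I would first observe that $C$ restricted to $R(C)$ is injective. Indeed, if $q_1,q_2\in R(C)$ satisfy $Cq_1=Cq_2=:p$, then $p\in R(C)$, so applying the second equivalence in the direction ``$Cq=p\Longrightarrow Xp=q$'' gives $Xp=q_1$ and $Xp=q_2$; since $X$ is a single-valued matrix, $q_1=q_2$. Consequently $R(C)\cap N(C)=\{0\}$ and $\dim C(R(C))=\dim R(C)$, i.e. $rank(C^2)=rank(C)$, which is precisely the condition for $C^{\#}$ to exist. Counting dimensions via rank--nullity, we also get the splitting $\mathbb{C}^n=R(C)\oplus N(C)$ together with the identity $R(C^2)=R(C)$.

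For (ii), the facts I would use are that $CC^{\#}=C^{\#}C$ is the idempotent projecting onto $R(C)$ along $N(C)$, and that $N(C^{\#})=N(C)$; both are read off from the defining relations $CC^{\#}C=C$, $C^{\#}CC^{\#}=C^{\#}$, $CC^{\#}=C^{\#}C$ together with $R(C^{\#})=R(C)$. Given these, the first hypothesis (read for all $p\in\mathbb{C}^n$) holds for $X=C^{\#}$ because $N(C^{\#})=N(C)$. For the second, if $q\in R(C)$ with $Cq=p$ then $C^{\#}p=C^{\#}Cq=q$ since $C^{\#}C$ fixes $R(C)$; conversely if $C^{\#}p=q$ with $p\in R(C)$ then $Cq=CC^{\#}p=p$ since $CC^{\#}$ fixes $R(C)$.

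For (iii), take any $v\in\mathbb{C}^n$ and write $v=v_1+v_2$ with $v_1\in R(C)$, $v_2\in N(C)$, using the splitting from (i). The first hypothesis gives $Xv_2=0$. Since $R(C)=R(C^2)$, we may write $v_1=Cq$ with $q\in R(C)$, and this $q$ is unique because $C|_{R(C)}$ is injective; the second hypothesis then forces $Xv_1=q$. Hence $Xv=q$ depends only on $C$ and not on $X$, so any two matrices obeying the hypotheses coincide; combined with (ii) this yields $X=C^{\#}$. I expect the only non-routine step to be (i) --- recognizing that the mere existence of a matrix $X$ satisfying the second equivalence is already enough to force $rank(C^2)=rank(C)$. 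After that, the argument is just bookkeeping with the projection $CC^{\#}$ and the direct sum $R(C)\oplus N(C)$.
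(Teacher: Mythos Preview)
The paper does not prove this theorem; it is quoted from \cite{bengre} and used as a black box in the proof of Theorem~\ref{intergrp}, so there is no in-paper argument to compare against. That said, your proof is correct and self-contained: step~(i) is the substantive part --- observing that the single-valuedness of $X$ in the second biconditional forces $C|_{R(C)}$ to be injective, hence $R(C)\cap N(C)=\{0\}$ and $rank(C^2)=rank(C)$ --- and steps~(ii) and~(iii) are the expected verifications using the decomposition $\mathbb{C}^n=R(C)\oplus N(C)$ and the projector $CC^{\#}$.
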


For the reader interested in studying applications of the group inverse, we point to \cite{mey}, where a probabilistic interpretation for the group inverse of a matrix arising from a Markov chain, is presented. For more details and proofs of the facts on generalized inverses that are used here, we refer the reader to the excellent book \cite{bengre}. 

\section{The case of the Moore-Penrose inverse}
First, we collect some prelminary properties.

\begin{thm}\label{prelim}
Let $A,B \in \mathbb{C}^{n \times n}$. Suppose that 
$$AB^*+BB^*=0 \ \ and \ \ B^*A+B^*B=0.$$
We then have:\\
$(a)$ $AB^{\dagger}+BB^{\dagger}=0 \ \ and \ \ B^{\dagger}A+B^{\dagger}B=0.$\\
$(b)$ $N(A) \subseteq N(B) \ \ and \ \ N(A^*) \subseteq N(B^*)$.\\
$(c)$ $BA^{\dagger}+BB^{\dagger}=0 \ \ and \ \ A^{\dagger}B+B^{\dagger}B=0.$\\
$(d)$ $BA^{\dagger}A=AA^{\dagger}B=B \ \ and \ \ BA^{\dagger}B=-B.$\\
$(e)$ $A^{\dagger}BB^{\dagger}=-B, B^{\dagger}AA^{\dagger}=B^{\dagger} \ \ and \ \ A^{\dagger}BA^{\dagger}+B^{\dagger}BA^{\dagger}=0$.\\
$(f)$ $BA^{\dagger}$ and $A^{\dagger}B$ are hermitian. 
\end{thm}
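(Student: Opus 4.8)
The plan is to put $C=A+B$; the two hypotheses then read $CB^{*}=0$ and $B^{*}C=0$, and taking conjugate transposes gives also $BC^{*}=0$ and $C^{*}B=0$. Expanding these four relations yields the identities $AB^{*}=BA^{*}=-BB^{*}$ and $B^{*}A=A^{*}B=-B^{*}B$, which, together with the facts on ranges and orthogonal projectors recalled in Section~2, will do essentially all the work.

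Part $(a)$ is immediate from the formulas $B^{\dagger}=B^{*}(BB^{*})^{\dagger}=(B^{*}B)^{\dagger}B^{*}$: indeed $AB^{\dagger}+BB^{\dagger}=(A+B)B^{*}(BB^{*})^{\dagger}=0$ and $B^{\dagger}A+B^{\dagger}B=(B^{*}B)^{\dagger}B^{*}(A+B)=0$. For $(b)$, from $AB^{*}=-BB^{*}$ one gets $R(B)=R(BB^{*})=R(AB^{*})\subseteq R(A)$, which is exactly $N(A^{*})\subseteq N(B^{*})$; symmetrically $A^{*}B=-B^{*}B$ gives $R(B^{*})=R(B^{*}B)=R(A^{*}B)\subseteq R(A^{*})$, that is, $N(A)\subseteq N(B)$.

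Part $(c)$ is the heart of the matter. Writing $A=C-B$ and using $CB^{*}=BC^{*}=C^{*}B=B^{*}C=0$, I would compute
\[
AA^{*}=CC^{*}+BB^{*}, \qquad A^{*}A=C^{*}C+B^{*}B .
\]
In each identity the two summands are Hermitian positive semidefinite with mutually orthogonal ranges ($B^{*}C=0$ forces $R(C)\perp R(B)$, and $CB^{*}=0$ forces $R(C^{*})\perp R(B^{*})$), so the Moore-Penrose inverse distributes over the sum: $(AA^{*})^{\dagger}=(CC^{*})^{\dagger}+(BB^{*})^{\dagger}$, and likewise for $A^{*}A$. Then, using $A^{\dagger}=A^{*}(AA^{*})^{\dagger}$ and $BA^{*}=-BB^{*}$,
\[
BA^{\dagger}=-BB^{*}\big[(CC^{*})^{\dagger}+(BB^{*})^{\dagger}\big]=-BB^{*}(BB^{*})^{\dagger}=-BB^{\dagger},
\]
the cross term vanishing because $(CC^{*})^{\dagger}$ has range $R(C)\subseteq N(B^{*})=N(BB^{*})$; the mirror computation with $A^{\dagger}=(A^{*}A)^{\dagger}A^{*}$ and $A^{*}B=-B^{*}B$ yields $A^{\dagger}B=-B^{\dagger}B$. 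The one genuinely non-routine ingredient I expect is the additivity $(P+Q)^{\dagger}=P^{\dagger}+Q^{\dagger}$ for Hermitian positive semidefinite $P,Q$ with $PQ=0$; this is itself a short check of the four defining equations, using $PQ=QP=0$ and $P^{\dagger}Q=QP^{\dagger}=0$.

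The remaining parts follow quickly. In $(d)$: $BA^{\dagger}A=B$ and $AA^{\dagger}B=B$ because $R(B^{*})\subseteq R(A^{*})$ and $R(B)\subseteq R(A)$ by $(b)$, so that $A^{\dagger}A$ and $AA^{\dagger}$ act as the identity on $R(A^{*})$ and $R(A)$ respectively, while $BA^{\dagger}B=-BB^{\dagger}B=-B$ by $(c)$. In $(e)$: from $(c)$, $A^{\dagger}BB^{\dagger}=-B^{\dagger}BB^{\dagger}=-B^{\dagger}$ (so the right-hand side of the first identity should read $-B^{\dagger}$), $B^{\dagger}AA^{\dagger}=-B^{\dagger}BA^{\dagger}=-B^{\dagger}(-BB^{\dagger})=B^{\dagger}BB^{\dagger}=B^{\dagger}$, and $A^{\dagger}BA^{\dagger}+B^{\dagger}BA^{\dagger}=(A^{\dagger}B+B^{\dagger}B)A^{\dagger}=0$. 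In $(f)$: $(c)$ gives $BA^{\dagger}=-BB^{\dagger}$ and $A^{\dagger}B=-B^{\dagger}B$, both Hermitian since $BB^{\dagger}$ and $B^{\dagger}B$ are Hermitian by the Moore-Penrose axioms.
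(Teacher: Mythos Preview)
Your proof is correct. Parts $(a)$, $(d)$, $(e)$, and $(f)$ follow essentially the paper's route, and you rightly flag the typo in the statement of $(e)$: the first identity should read $A^{\dagger}BB^{\dagger}=-B^{\dagger}$, not $-B$. For $(b)$ you argue via range inclusions ($R(B)=R(BB^{*})=R(AB^{*})\subseteq R(A)$) rather than the paper's null-space computation ($Ax=0\Rightarrow B^{*}Bx=0\Rightarrow Bx=0$); these are equivalent formulations of the same fact.

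The one place your argument genuinely diverges is $(c)$. The paper takes conjugate transposes to get $B(A^{*}+B^{*})=0$ and then writes ``arguing as earlier, $B(A^{\dagger}+B^{\dagger})=0$''; but the ``earlier'' trick in $(a)$ was $(A+B)B^{\dagger}=(A+B)B^{*}(BB^{*})^{\dagger}$, which factors a \emph{single} $B^{\dagger}$, and does not literally transfer to $A^{\dagger}+B^{\dagger}$ since $A^{\dagger}=A^{*}(AA^{*})^{\dagger}$ and $B^{\dagger}=B^{*}(BB^{*})^{\dagger}$ carry different right factors. Your route --- the orthogonal splitting $AA^{*}=CC^{*}+BB^{*}$ with $C=A+B$, the auxiliary lemma $(P+Q)^{\dagger}=P^{\dagger}+Q^{\dagger}$ for Hermitian positive semidefinite $P,Q$ with $PQ=0$, and the computation $BA^{\dagger}=-BB^{*}(AA^{*})^{\dagger}=-BB^{*}(BB^{*})^{\dagger}=-BB^{\dagger}$ with the cross term killed by $R((CC^{*})^{\dagger})=R(C)\subseteq N(B^{*})$ --- is longer but fully self-contained. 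What it buys is that every step is justified on the page; what the paper's version buys is brevity, at the cost of leaving the reader to supply the missing line (e.g.\ by noting $B^{*}=B^{*}AA^{\dagger}=-B^{*}BA^{\dagger}$, whence $B^{\dagger}=-B^{\dagger}BA^{\dagger}$ and so $BA^{\dagger}=-BB^{\dagger}$).
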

\begin{proof}
$(a)$ We have $0=AB^*+BB^*=(A+B)B^*$ and so, $$AB^{\dagger}+BB^{\dagger}=(A+B)B^{\dagger}=(A+B)B^*(BB^*)^{\dagger}=0.$$ Also, $0=B^*A+B^*B=B^*(A+B)$, which yields $$B^{\dagger}(A+B)=(B^*B)^{\dagger}B^*(A+B)=0.$$
$(b)$ From the equality  $B^*A+B^*B=0,$ it follows that if $x \in N(A),$ then $B^*Bx=0$, which implies that $Bx=0$, due to the well known condition $N(B^*B)=N(B)$. Thus $N(A) \subseteq N(B)$. Taking conjugate transposes of $AB^*+BB^*=0,$ we get $BA^*+BB^*=0.$ Now, if $x \in N(A^*)$, then $BB^*x=0$, which yields $B^*x=0$, showing that $N(A^*) \subseteq N(B^*)$.\\
$(c)$ Taking the transposes of $AB^*+BB^*=0$, one obtains $0=BA^*+BB^*=B(A^*+B^*)$. Arguing as earlier, we have $0=B(A^{\dagger}+B^{\dagger}),$ yielding the first identity. The second identity follows similarly. \\
$(d)$ One has $(A^{\dagger}A)^*B^*=A^{\dagger}AB^*=B^*$, where the last equality is due to the fact that $R(B^*) \subseteq R(A^*)$ (which in turn, is due to $N(A) \subseteq N(B)$). Upon taking transposes, one obtains $B=BA^{\dagger}A$. Next, since $R(B) \subseteq R(A)$, one has $AA^{\dagger}B=B$. From the second identity of $(c)$, upon premultiplying by $B$, one has $BB^{\dagger}B+BA^{\dagger}B=0$, i.e $BA^{\dagger}B=-B$.\\
$(e)$ Using the second identity of $(c)$, we have $A^{\dagger}B=-B^{\dagger}B$. Thus, one has $A^{\dagger}BB^{\dagger}=-B^{\dagger}BB^{\dagger}=-B^{\dagger}$, proving the first part. Also, $AA^{\dagger}(B^{\dagger})^*=(B^{\dagger})^*$, since $R((B^{\dagger})^*)=R(B)$ (which is contained in $R(A)$). Upon taking transposes, we get  $B^{\dagger}AA^{\dagger}=B^{\dagger}$. Upon post multiplying the second identity of $(c)$ by $A^{\dagger}$, we obtain the third part. \\
$(f)$ By $(c)$, we have $BA^{\dagger}=-BB^{\dagger}$, proving that $BA^{\dagger}$ is hermitian. The second part is similar.
\end{proof}

\begin{thm}\label{mainmp}
Let $A,B \in \mathbb{C}^{n \times n}$ be related in such a way that 
$$AB^*+BB^*=0 \ \ and \ \ B^*A+B^*B=0.$$
We then have: $$(A+B)^{\dagger}=A^{\dagger} + B^{\dagger}  .$$
\end{thm}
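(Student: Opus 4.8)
The plan is to show that $X := A^{\dagger}+B^{\dagger}$ satisfies the four Penrose equations relative to $C := A+B$, i.e.\ $CXC=C$, $XCX=X$, $(CX)^{*}=CX$ and $(XC)^{*}=XC$; uniqueness of the Moore-Penrose inverse then forces $X=C^{\dagger}$. Every identity I need should already be available from Theorem \ref{prelim}, supplemented only by the elementary observation that $AA^{\dagger}$, $A^{\dagger}A$, $BB^{\dagger}$, $B^{\dagger}B$ are hermitian (being orthogonal projectors).

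I would begin by simplifying $CX$ and $XC$. Expanding $CX=AA^{\dagger}+AB^{\dagger}+BA^{\dagger}+BB^{\dagger}$ and substituting $AB^{\dagger}=-BB^{\dagger}$ and $BA^{\dagger}=-BB^{\dagger}$ (the first identities of Theorem \ref{prelim}(a) and (c)) gives $CX=AA^{\dagger}-BB^{\dagger}$, a difference of hermitian matrices, hence hermitian. Dually, expanding $XC=A^{\dagger}A+A^{\dagger}B+B^{\dagger}A+B^{\dagger}B$ and using $A^{\dagger}B=-B^{\dagger}B$ and $B^{\dagger}A=-B^{\dagger}B$ (the second identities of (c) and (a)) gives $XC=A^{\dagger}A-B^{\dagger}B$, again hermitian. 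This settles the two symmetry equations at once.

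For $CXC=C$ I would use $CX=AA^{\dagger}-BB^{\dagger}$ to get $CXC=A+AA^{\dagger}B-BB^{\dagger}A-B$; here $AA^{\dagger}B=B$ by Theorem \ref{prelim}(d), while $BB^{\dagger}A=B(B^{\dagger}A)=-BB^{\dagger}B=-B$ using the second identity of (a), so $CXC=A+B$. For $XCX=X$ I would use $XC=A^{\dagger}A-B^{\dagger}B$ to get $XCX=A^{\dagger}+A^{\dagger}AB^{\dagger}-B^{\dagger}BA^{\dagger}-B^{\dagger}$; here $A^{\dagger}AB^{\dagger}=B^{\dagger}$ because $R(B^{\dagger})=R(B^{*})\subseteq R(A^{*})$ (a consequence of $N(A)\subseteq N(B)$, Theorem \ref{prelim}(b)) and $A^{\dagger}A$ is the orthogonal projector onto $R(A^{*})$, whereas post-multiplying the second identity of (a), $B^{\dagger}A+B^{\dagger}B=0$, by $A^{\dagger}$ and invoking $B^{\dagger}AA^{\dagger}=B^{\dagger}$ from (e) yields $B^{\dagger}BA^{\dagger}=-B^{\dagger}$. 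Hence $XCX=A^{\dagger}+B^{\dagger}=X$.

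I do not expect a genuine obstacle in this theorem: once Theorem \ref{prelim} is in hand the argument is pure bookkeeping, and the only point requiring care is to invoke, in each of the four verifications, the correct one among the many consequences recorded there — in particular to keep track of which steps rely on $R(B)\subseteq R(A)$ and which on $R(B^{*})\subseteq R(A^{*})$. The real work has therefore been front-loaded into establishing parts (a)--(e) of Theorem \ref{prelim}.
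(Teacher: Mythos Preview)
Your proof is correct and follows essentially the same strategy as the paper: verify the four Penrose equations for $X=A^{\dagger}+B^{\dagger}$ using the identities of Theorem~\ref{prelim}. The only cosmetic difference is that you simplify $CX$ and $XC$ all the way to $AA^{\dagger}-BB^{\dagger}$ and $A^{\dagger}A-B^{\dagger}B$ (differences of projectors, visibly hermitian), whereas the paper stops at $AA^{\dagger}+BA^{\dagger}$ and $A^{\dagger}A+A^{\dagger}B$ and then invokes part~(f) to see these are hermitian; your route thus bypasses~(f) entirely, but otherwise the bookkeeping is the same.
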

\begin{proof}
Set $X=A^{\dagger} + B^{\dagger}$. We show that $X=(A+B)^{\dagger}$ by verifying the four equations for the Moore-Penrose inverse. We have $$(A+B)X=AA^{\dagger}+AB^{\dagger}+BA^{\dagger}+BB^{\dagger}=AA^{\dagger}+BA^{\dagger},$$
by using the fact that $AB^{\dagger}+BB^{\dagger}=0$. By $(f)$ of Theorem \ref{prelim}, $BA^{\dagger}$ is hermitian and so $(A+B)X$ is hermitian. Also, 
\begin{eqnarray*}
X(A+B)X&=&(A^{\dagger}+B^{\dagger})(AA^{\dagger}+BA^{\dagger})\\
&=&A^{\dagger}AA^{\dagger}+A^{\dagger}BA^{\dagger}+B^{\dagger}AA^{\dagger}+B^{\dagger}BA^{\dagger}\\
&=&A^{\dagger} + B^{\dagger}AA^{\dagger}\\
&=&A^{\dagger}+B^{\dagger},
\end{eqnarray*}
where we have made use of the second and third parts of $(e)$ of Theorem \ref{prelim}. Further, 
\begin{eqnarray*}
(A+B)X(A+B)&=&(AA^{\dagger}+BA^{\dagger})(A+B)\\
&=&AA^{\dagger}A+AA^{\dagger}B+BA^{\dagger}A+BA^{\dagger}B\\
&=&A + BA^{\dagger}A\\
&=&A+B,
\end{eqnarray*}
where we have used all the formulae in $(d)$ of Theorem \ref{prelim}. Finally, one has 
$$X(A+B)=A^{\dagger}A+A^{\dagger}B+B^{\dagger}A+B^{\dagger}B=A^{\dagger}A+A^{\dagger}B,$$
where the second part of $(a)$ of Theorem \ref{prelim} was used. Again, by $(f)$ of Theorem \ref{prelim}, since $A^{\dagger}B$ is hermitian, it follows that $X(A+B)$ is hermitian, completing the proof.
\end{proof}

\section{Group inverse analogue}
First, we collect some prelminary properties.

\begin{thm}\label{prelimgrp}
Let $A,B \in \mathbb{C}^{n \times n}$. Suppose that $B^{\#}$ exists and that one has the following relationships between $A$ and $B$:\\
$$AB^{\#}+BB^{\#}=0 \ \ and \ \ B^{\#}A+B^{\#}B=0.$$
We then have:\\
$(a)$ $N(A) \subseteq N(B) \ \ and \ \ N(A^*) \subseteq N(B^*)$.\\
$(b)$ $AB=BA=-B^2.$\\
$(c)$ $R(A+B) \subseteq N(B) \ \ and \ \ R(A^*+B^*) \subseteq N(B^*).$\\
$(d)$ $R(B^*) \subseteq N(A^*+B^*) \ \ and \ \ R(B) \subseteq N(A+B).$\\
$(e)$ $(A+B)^{\#}, A^{\#}$ and $(AB)^{\#}$ exist.
\end{thm}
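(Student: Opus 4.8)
The plan is to establish the five parts in essentially the order stated, since later parts lean on earlier ones. For part $(a)$, the hypotheses $AB^{\#}+BB^{\#}=0$ and $B^{\#}A+B^{\#}B=0$ can be rewritten as $(A+B)B^{\#}=0$ and $B^{\#}(A+B)=0$. From $B^{\#}(A+B)=0$, premultiply by $B$ and use $BB^{\#}=B^{\#}B$ together with $BB^{\#}B=B$ (since $BB^{\#}$ acts as the identity on $R(B)=R(B^{\#})$) to get $B(A+B)=0$, i.e.\ $BA=-B^2$; this gives $N(A)\subseteq N(B)$ after noting that $Ax=0$ forces $Bx = B B^\# B x$ via the representation of $B$ as $B B^\# B$ and the fact that $B^\#$ is a polynomial in $B$ — more directly, from $(A+B)B^\# = 0$ one gets $(A+B)$ annihilates $R(B^\#)=R(B)$, and combined with $BA = -B^2$ one can trace the null space inclusion. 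The $N(A^*)\subseteq N(B^*)$ statement will follow by taking conjugate transposes of the hypotheses and repeating the argument, using that $(B^{\#})^* = (B^*)^{\#}$.

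For part $(b)$, I would symmetrize the computation in $(a)$: from $(A+B)B^{\#}=0$, postmultiply by $B$ and use $B^{\#}B = BB^{\#}$, $BB^{\#}B = B$ to obtain $(A+B)B = 0$, i.e.\ $AB = -B^2$; combined with $BA=-B^2$ from part $(a)$ this yields $AB=BA=-B^2$. Parts $(c)$ and $(d)$ are then almost immediate: $(A+B)B = 0$ says exactly $R(B)\subseteq N(A+B)$, and $B(A+B)=0$ says $R(A+B)\subseteq N(B)$; the starred versions follow by conjugate transposition, giving $R(B^*)\subseteq N(A^*+B^*)$ and $R(A^*+B^*)\subseteq N(B^*)$. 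So parts $(b)$, $(c)$, $(d)$ are really corollaries of the two identities $(A+B)B = B(A+B) = 0$.

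The substantive part is $(e)$, the existence of the three group inverses. For $(A+B)^{\#}$, I would use the rank (equivalently range) criterion: $(A+B)^{\#}$ exists iff $R((A+B)^2) = R(A+B)$, equivalently $N((A+B)^2)=N(A+B)$. Since $B(A+B)=0$ we have $(A+B)^2 = A(A+B)$, and one needs to show no vector is newly killed. Alternatively, and perhaps more cleanly, I would try a full-rank factorization $A+B = FG$ and show $GF$ is invertible, using the relations just derived to control how $A$ and $B$ interact. For $A^{\#}$: write $A = (A+B) - B = (A+B) + B$ is not quite it — rather use $A = -B + (A+B)$ and exploit $AB = BA = -B^2$ to show $A^2$ and $A$ have the same range. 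Indeed $A^2 = A(A+B) - AB = A(A+B) + B^2$, and since $B^2 = -AB$ has range inside $R(A)$, one gets $R(A^2)\subseteq R(A)$; the reverse inclusion $R(A)\subseteq R(A^2)$ is the point that needs care, and here I would lean on the already-established existence of $(A+B)^{\#}$ and the decomposition of the space into $R(B)\oplus(\text{something})$ coming from $(c)$ and $(d)$. Finally $(AB)^{\#} = (-B^2)^{\#}$ exists because $(B^2)^{\#} = (B^{\#})^2$ whenever $B^{\#}$ exists (the group inverse of a power is the power of the group inverse), and scaling by $-1$ is harmless.

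The main obstacle I anticipate is the existence of $A^{\#}$: the inclusion $R(A)\subseteq R(A^2)$ (or the injectivity of $A$ on $R(A)$) does not drop out of the identities $AB=BA=-B^2$ alone, and I expect to need a decomposition argument — writing $\mathbb{C}^n = R(B)\oplus W$ compatibly with the action of $A$, $B$, and $A+B$ on the two summands (on $R(B)$ one has $A = -B$ by $(b)$-type reasoning restricted appropriately, and $A+B$ kills $R(B)$ by $(d)$), and then checking the rank condition summand by summand. Theorem~\ref{grp} may also be invoked to verify $A^{\#}$ or $(A+B)^{\#}$ directly by exhibiting the characterizing action on the relevant range space, which could sidestep the need for an explicit full-rank factorization.
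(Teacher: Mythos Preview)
Your handling of (a)--(d) is broadly correct and close to the paper's. Your argument for (a) is unnecessarily tangled, however: the paper proceeds directly from $B^{\#}(A+B)=0$ --- if $Ax=0$ this gives $B^{\#}Bx=0$, and premultiplying by $B$ (using $BB^{\#}B=B$) yields $Bx=0$; there is no need to route through $BA=-B^2$ first. For (b)--(d) your route via $(A+B)B=B(A+B)=0$ is essentially the paper's (the paper writes $AB=A(B^{\#}B^2)=(AB^{\#})B^2=-BB^{\#}B^2=-B^2$, which is your computation rearranged). Your argument for $(AB)^{\#}$ via $(-B^2)^{\#}=-(B^{\#})^2$ is a clean alternative to the paper's range/null-space complementarity argument.

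For the existence of $(A+B)^{\#}$ and $A^{\#}$ in (e), you rightly sense trouble, and in fact the claim as stated cannot be proved: take $B=0$ (so $B^{\#}=0$ and both hypotheses hold trivially) and let $A$ be any nonzero nilpotent matrix; then neither $A^{\#}$ nor $(A+B)^{\#}=A^{\#}$ exists. The paper's argument computes $(A+B)^2=A^2-B^2$, assumes $(A^2-B^2)x=0$, premultiplies by $B^{\#}$ to reach $-B^{\#}BAx-Bx=0$, and then asserts this equals $-Ax-Bx$. That last step requires $B^{\#}BAx=Ax$, i.e.\ $Ax\in R(B)$, which does not follow from the hypotheses (only $R(B)\subseteq R(A)$ is available, not the reverse); with $B=0$ the step reads $0=Ax$. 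The paper's subsequent argument for $A^{\#}$ then bootstraps from this flawed implication. So neither your proposed decomposition $\mathbb{C}^n=R(B)\oplus N(B)$ nor any full-rank-factorization device will close this gap without an additional hypothesis (for instance, assuming $A^{\#}$ exists a priori, or that $R(A)=R(B)$).
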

\begin{proof}
$(a)$ Let $Ax=0$. Then from the second condition, one has $$0=B^{\#}Ax+B^{\#}Bx=B^{\#}Bx,$$ which upon premultiplying by $B$, gives $Bx=0$. Hence $N(A) \subseteq N(B)$. Similarly, by taking the conjugate transposes of the first condition and premultiplying by $B^*$, one has the implication $A^*x=0 \Longrightarrow B^*x=0$, showing that the second inclusion holds. \\
$(b)$ We have $$AB=AB^{\#}B^2=-BB^{\#}B^2=-B^2,$$ using the first identity. Employing the second identity, one has $$BA=B^2B^{\#}A=-B^2B^{\#}B=-B^2.$$ 
$(c)$ The second identity is the same as $B^{\#}(A+B)=0$ and this shows that $R(A+B) \subseteq N(B^{\#})=N(B)$. The other inclusion is similarly proved, upon taking the conjugate transposes of the first identity and using the fact that $(B^{\#})^*=(B^*)^{\#}$. \\
$(d)$ Consequence of $(c)$.\\
$(e)$ First, observe that $(A+B)^2=A^2+B^2+AB+BA=A^2-B^2$. We show that $N((A+B)^2)=N(A+B)$. Let $(A+B)^2x=0$ so that $(A^2-B^2)x=0$. Upon premultiplying by $B^{\#}$, one then obtains $$0=B^{\#}A^2x-B^{\#}B^2x=-B^{\#}BAx-Bx=-Ax-Bx,$$ i.e. $x \in N(A+B)$. \\
Let $A^2x=0$. Then $Ax \in N(A)\subseteq N(B)$ and so $BAx=0$ so that $B^2x=0$ (which in turn implies that $Bx=0$). Thus $(A+B)^2x=(A^2-B^2)x=0$ and so $(A+B)x=0$. Thus, $Ax=0$, proving that $N(A^2) \subseteq N(A)$, so that $A^{\#}$ exists. By $(c)$, $R(AB)=R(B^2)=R(B)$ and $N(AB)=N(B^2)=N(B)$. Since $B^{\#}$ exists, the subspaces $R(B)$ and $N(B)$ are complementary and so are $R(AB)$ and $N(AB)$, proving the existence of the group inverse of $AB$. \\
\end{proof}

\begin{rem}
Note that since $AB=BA$ and since the group inverse of a matrix is a polynomial in that matrix, the mutual commutativity relationships between $A, B, A^{\#}$ and $B^{\#}$ are applicable (note that the two conditions of the result above already imply $AB^{\#}=B^{\#}A$). This fact will be used frequently in our proofs.
\end{rem}

Applying Theorem \ref{prelimgrp} and Theorem \ref{grp}, we prove the next result.

\begin{thm}\label{intergrp}
Let $A, B$ satisfy the conditions of Theorem \ref{prelimgrp}. Then $$A^{\#}BA^{\#}=B^{\#}.$$
\end{thm}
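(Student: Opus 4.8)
The plan is to apply Theorem \ref{grp} with $C=B$ and the candidate $X=A^{\#}BA^{\#}$: it then suffices to show that for all $p,q\in R(B)$,
$$Xp=0 \ \Longleftrightarrow\ Bp=0 \qquad\text{and}\qquad Xp=q\ \Longleftrightarrow\ Bq=p,$$
since Theorem \ref{grp} immediately forces $X=B^{\#}$, which is exactly the asserted identity. Note that $A^{\#}$ and $B$ commute by the preceding remark, so $X=(A^{\#})^{2}B$ as well, a form that is often more convenient.

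Before doing the verification I would assemble from Theorem \ref{prelimgrp} the facts that make the computation go. Part $(a)$ gives $N(A^{*})\subseteq N(B^{*})$, i.e.\ $R(B)\subseteq R(A)$, so that $AA^{\#}B=B$ and, since $A^{\#}A=AA^{\#}$ fixes $R(A)$ pointwise, $A^{\#}A$ acts as the identity on $R(B)$. Part $(e)$ guarantees $A^{\#}$ exists, and the preceding remark lets me commute $A^{\#}$ freely past $B$. Combining $A^{\#}AB=B$ with the identity $AB=-B^{2}$ of part $(b)$ yields the workhorse relation $A^{\#}B^{2}=-B$, and applying $A^{\#}$ once more gives $(A^{\#})^{2}B^{2}=-A^{\#}B$. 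In particular, for $p\in R(B)$, writing $p=Bv$ gives $A^{\#}Bp=A^{\#}B^{2}v=-Bv=-p$, so $A^{\#}B$ restricts to $-I$ on $R(B)$; and since $B^{\#}$ exists, $R(B)\cap N(B)=\{0\}$.

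With these in hand the four implications are short. For $p\in R(B)$: if $Xp=0$, then multiplying by $A^{2}$ and commuting collapses the right-hand side to $(AA^{\#})^{2}Bp=AA^{\#}Bp=Bp$ (using $Bp\in R(B)\subseteq R(A)$), so $Bp=0$; conversely $Bp=0$ with $p\in R(B)$ forces $p=0$, whence $Xp=0$. For $p,q\in R(B)$: if $Bq=p$, then $Xp=(A^{\#})^{2}B\,(Bq)=(A^{\#})^{2}B^{2}q=-A^{\#}Bq=q$; conversely if $Xp=q$, then $Bq=BXp=(A^{\#})^{2}B^{2}p=-A^{\#}Bp=p$. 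Applying Theorem \ref{grp} then finishes the proof.

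The only genuine obstacle is the preparatory step: I must be confident that $A^{\#}A$ fixes $R(B)$ pointwise (hence $A^{\#}AB=B$) and that $A^{\#}$ really commutes with $B$, because everything downstream rests on the derived identity $A^{\#}B^{2}=-B$; both facts trace back to part $(a)$ via $R(B)\subseteq R(A)$ and to the fact that a group inverse is a polynomial in its matrix. A secondary point of care is bookkeeping — the identities $A^{\#}Bp=-p$ and $AA^{\#}Bp=Bp$ are invoked only for $p\in R(B)$, which is precisely the range over which Theorem \ref{grp} requires the two conditions to hold. One could also bypass Theorem \ref{grp} entirely: $(A^{\#})^{2}B$ and $B^{\#}$ both vanish on $N(B)$ and, since $(A^{\#})^{2}B^{3}=B=B^{\#}B^{2}$, they agree on $R(B^{2})=R(B)$, while $\mathbb{C}^{n}=R(B)\oplus N(B)$.
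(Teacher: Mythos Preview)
Your argument is correct and follows the same strategy as the paper: verify the two characterizing conditions of Theorem~\ref{grp} for $X=A^{\#}BA^{\#}$ with $C=B$. Your execution is in fact tidier than the paper's: by first isolating the identity $A^{\#}B^{2}=-B$ (equivalently, $A^{\#}B$ restricts to $-I$ on $R(B)$), all four implications become one-liners, whereas the paper handles the direction ``$Xp=q\Rightarrow Bq=p$'' via a longer chain passing through $A^{2}q=Bp$ and $N(AB)=N(B)$. Your closing remark---that $(A^{\#})^{2}B$ and $B^{\#}$ agree on $R(B)$ (via $(A^{\#})^{2}B^{3}=B$) and both vanish on $N(B)$, while $\mathbb{C}^{n}=R(B)\oplus N(B)$---is a perfectly valid alternative that bypasses Theorem~\ref{grp} altogether.

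One small wording issue: Theorem~\ref{grp} requires the equivalence $Xp=0\Longleftrightarrow Bp=0$ for \emph{all} $p$, not only for $p\in R(B)$. This is harmless here: your computation for $Xp=0\Rightarrow Bp=0$ never actually uses $p\in R(B)$ (only $Bp\in R(A)$), and the converse $Bp=0\Rightarrow Xp=0$ is immediate from $X=(A^{\#})^{2}B$ without any restriction on $p$. So simply drop the ``for $p\in R(B)$'' in that clause and replace your roundabout converse (``$Bp=0$ with $p\in R(B)$ forces $p=0$'') by the direct observation $Xp=(A^{\#})^{2}Bp=0$.
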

\begin{proof}
Set $X=A^{\#}BA^{\#}.$ We must show that $X=B^{\#}$. \\
First, let $Bp=0$. Then $Xp=A^{\#}BA^{\#}p=A^{\#}A^{\#}Bp=0$. Conversely, let $Xp=0$ so that $A^{\#}BA^{\#}p=0$. Then $BA^{\#}p \in N(A) \subseteq N(B)$, and $BA^{\#}p \in R(B)$. Since $B^{\#}$ exists, this means that $BA^{\#}p=0$. So, $A^{\#}Bp=0$ so that $Bp \in N(A) \subseteq N(B)$ as well as $Bp \in R(B)$. So, $Bp=0$. We have shown that $Xp=0 \Longleftrightarrow Bp=0$. \\
Next, let $Bq=p,$ given $p, q\in R(B)$. Then $$Xp=A^{\#}BA^{\#}p=A^{\#}A^{\#}Bp.$$ Now, $p \in R(B) \subseteq N(A+B)$ and so $Bp=-Ap$. Thus $$Xp=-A^{\#}A^{\#}Ap=-A^{\#}p=-A^{\#}Bq.$$ Again, $q \in R(B) \subseteq N(A+B)$ and so $Bq=-Aq$. Thus, $Xp=A^{\#}Aq=q$, since $q \in R(B) \subseteq R(A)$. \\
Finally, let $Xp=q$ so that $$q=A^{\#}BA^{\#}p=A^{\#}A^{\#}Bp.$$ Now, $A^2(A^{\#})^2=AA^{\#}$ and since $R(B) \subseteq R(A)$, upon premultiplying the equation above by $A^2$, we then have $A^2q=Bp$. Then $BA^2q=B^2p$ and so $ABAq=-ABp$. This means that $p+Aq \in N(AB)=N(B)$ and so, $Bp+BAq=0$. Premultiplying by $B^{\#}$ and using the fact that $p, q \in R(B)$ as well as $B^{\#}BA=AB^{\#}B$, one obtains $p+Aq=0$. Premultiplying by $B^{\#}$ one obtains $B^{\#}Aq=-B^{\#}p$, and so $B^{\#}Bq=B^{\#}p.$ Premultiplying by $B$ and using the fact that $p \in R(B)$, we get $Bq=p$. Thus, one has $Xp=q \Longleftrightarrow Bq=p \ \ for ~all \ \ p,q \in R(B).$ By Theorem \ref{grp}, the conclusion follows.
\end{proof}

We are now in a position to prove the group inverse analogue of Theorem \ref{mainmp}.

\begin{thm}\label{maingrp}
Let $A,B \in \mathbb{C}^{n \times n}$ be related in such a way that 
$$AB^{\#}+BB^{\#}=0 \ \ and \ \ B^{\#}A+B^{\#}B=0.$$
We then have: $$(A+B)^{\#}=A^{\#} + B^{\#}  .$$
\end{thm}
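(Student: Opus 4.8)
The plan is to set $X=A^{\#}+B^{\#}$ and verify the three defining equations for the group inverse of $A+B$, namely $(A+B)X(A+B)=A+B$, $X(A+B)X=X$, and $X(A+B)=(A+B)X$. Since the hypotheses of Theorem \ref{prelimgrp} are in force, I have at my disposal: the commutativity of $A$, $B$, $A^{\#}$, $B^{\#}$ with one another (from the Remark); the identity $AB=BA=-B^{2}$ from part $(b)$; the inclusions $R(B)\subseteq N(A+B)$ and $R(B^{*})\subseteq N(A^{*}+B^{*})$ from part $(d)$; the existence of $(A+B)^{\#}$ and $A^{\#}$ from part $(e)$; and, crucially, the relation $A^{\#}BA^{\#}=B^{\#}$ from Theorem \ref{intergrp}.

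First I would record the two consequences of Theorem \ref{intergrp} that will do most of the work: multiplying $A^{\#}BA^{\#}=B^{\#}$ on the left by $A$ gives $AA^{\#}BA^{\#}=AB^{\#}$, and since $R(B)\subseteq R(A)$ forces $AA^{\#}B=B$, this reads $BA^{\#}=AB^{\#}$; similarly $A^{\#}BA^{\#}A=B^{\#}A$ gives $A^{\#}B=B^{\#}A$ (using $BA^{\#}A=B$, which holds because $R(B^{*})\subseteq R(A^{*})$). Combined with the first hypothesis $AB^{\#}+BB^{\#}=0$ and the second $B^{\#}A+B^{\#}B=0$, these let me rewrite mixed products: e.g. $BA^{\#}=AB^{\#}=-BB^{\#}$, and $A^{\#}B=B^{\#}A=-B^{\#}B$. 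Note $BB^{\#}=B^{\#}B$ is the projector onto $R(B)$ along $N(B)$; call it $P$. So the cross terms $BA^{\#}$, $A^{\#}B$, $AB^{\#}$, $B^{\#}A$ all equal $-P$ or its transpose-free self, and they are symmetric in a bookkeeping sense that makes the commutativity check nearly automatic.

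Now the three verifications. For the commutativity equation: $X(A+B)=A^{\#}A+A^{\#}B+B^{\#}A+B^{\#}B = AA^{\#}+(-P)+(-P)+P = AA^{\#}-P$, and by the same token $(A+B)X=AA^{\#}+AB^{\#}+BA^{\#}+BB^{\#}=AA^{\#}+(-P)+(-P)+P=AA^{\#}-P$, so $X(A+B)=(A+B)X$. For $(A+B)X(A+B)=(A+B)(AA^{\#}-P)=A+AA^{\#}B-AP-BP = A+B-B-B$ — here I would use $AA^{\#}B=B$, $AP=AB^{\#}B=-B^{2}B^{\#}\cdots$ wait, more cleanly $AP=ABB^{\#}=(-B^{2})B^{\#}=-BB^{\#}B=-B$ and $BP=B$ — giving $A+B$ as required. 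For $X(A+B)X=(AA^{\#}-P)X=(AA^{\#}-P)(A^{\#}+B^{\#})=A^{\#}+AA^{\#}B^{\#}-PA^{\#}-PB^{\#}$; using $AA^{\#}B^{\#}=B^{\#}$ (since $R(B^{\#})=R(B)\subseteq R(A)$), $PA^{\#}=BB^{\#}A^{\#}=B^{\#}(BA^{\#})=B^{\#}(-BB^{\#})=-B^{\#}$ and $PB^{\#}=BB^{\#}B^{\#}=B^{\#}$, this collapses to $A^{\#}+B^{\#}+B^{\#}-B^{\#}=A^{\#}+B^{\#}=X$.

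The main obstacle I anticipate is not any single hard step but the discipline of reducing every mixed product consistently to the projector $P=BB^{\#}$: one must be careful that identities like $AA^{\#}B=B$, $BA^{\#}A=B$, $AA^{\#}B^{\#}=B^{\#}$, $BB^{\#}A^{\#}=\pm B^{\#}$ are each justified by the correct range inclusion from Theorem \ref{prelimgrp}$(a)$–$(d)$ together with $BA^{\#}=-BB^{\#}$ and $A^{\#}B=-B^{\#}B$ from Theorem \ref{intergrp}. As long as those are nailed down first, all three verifications are short linear simplifications. Since $(A+B)^{\#}$ is known to exist by Theorem \ref{prelimgrp}$(e)$, uniqueness of the group inverse then gives $(A+B)^{\#}=X=A^{\#}+B^{\#}$.
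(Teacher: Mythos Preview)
Your proof is correct and follows essentially the same route as the paper: both set $X=A^{\#}+B^{\#}$ and verify the three group-inverse axioms directly, drawing on Theorem~\ref{prelimgrp} for the range/null-space inclusions and commutativity, and on Theorem~\ref{intergrp} for the one nontrivial identification. Your device of first collapsing every cross term to $\pm P=\pm BB^{\#}$ is a tidy bookkeeping variant (the paper instead keeps $(A+B)X=AA^{\#}+BA^{\#}$ and invokes Theorem~\ref{intergrp} only at the step $X(A+B)X=A^{\#}+A^{\#}BA^{\#}=A^{\#}+B^{\#}$); just note the small slip where you write ``$A+B-B-B$'' before correctly recovering $A+B$ once you compute $AP=-B$.
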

\begin{proof}
Consider $$(A+B)(A^{\#}+B^{\#})=AA^{\#}+BB^{\#}+AB^{\#}+BA^{\#}=AA^{\#}+BA^{\#},$$
where we have used the fact that $BB^{\#}+AB^{\#}=0.$ This means that one has 
$$(A+B)(A^{\#}+B^{\#})(A+B)=A+AA^{\#}B+BA^{\#}A+BA^{\#}B.$$
Now, premultiplying ($(b)$ of Theorem \ref{prelimgrp} viz.,) $AB=-B^2$, by $B^{\#}$ one obtains $B=B^{\#}B^2=-B^{\#}AB$. Also, $BA^{\#}A=A^{\#}AB=B$ and so the sum of the last two terms in the expression above equals zero. We have shown that 
$$(A+B)(A^{\#}+B^{\#})(A+B)=A+B.$$ Next, from the first expression as above, one has
\begin{eqnarray*}
(A^{\#}+B^{\#})(A+B)(A^{\#}+B^{\#})&=&(A^{\#}+B^{\#})(AA^{\#}+BA^{\#})\\
&=& A^{\#}AA^{\#}+A^{\#}BA^{\#}+B^{\#}AA^{\#}+B^{\#}BA^{\#}\\
&=& A^{\#}+A^{\#}BA^{\#},
\end{eqnarray*}
where we have made use of the fact that $B^{\#}AA^{\#}+B^{\#}BA^{\#}=0$, since $B^{\#}A+B^{\#}B=0.$ By Theorem \ref{intergrp}, $A^{\#}BA^{\#}=B^{\#}$ and so the expression above simplifies to $A^{\#}+B^{\#}.$
Finally, 
\begin{eqnarray*}
(A^{\#}+B^{\#})(A+B)&=&A^{\#}A+A^{\#}B+B^{\#}A+B^{\#}B\\
&=& A^{\#}A+A^{\#}B,
\end{eqnarray*}
where we have made use of the identity $B^{\#}A+B^{\#}B=0.$ As was already mentioned, since $B$ and $A^{\#}$ commute, it also follows that 
$$(A^{\#}+B^{\#})(A+B)=(A+B)(A^{\#}+B^{\#}).$$ This completes the proof.
\end{proof}

\begin{rem}\label{where}
Where do the sufficient conditions for Theorem \ref{mainmp} and Theorem \ref{maingrp} come from? Recall that for $A,B \in \mathbb{C}^{m \times n}$, one says that $A {\leq}^* B$ (which is referred to as the ``star partial order'') if $AA^*=BA^*$ and $A^*A=A^*B$. Analogously, the notation $A {\leq}^{\#} B$ (which is referred to as the ``sharp partial order'') signifies the fact that $AA^{\#}=BA^{\#}$ and $A^{\#}A=A^{\#}B$ (assuming that the group inverse $A^{\#}$ exists). Pioneering contributions were made on matrix partial orders by Mitra \cite{mit}. There the author shows that if $A {\leq}^* B$, then one has the identity $(B-A)^{\dagger}=B^{\dagger}-A^{\dagger}$, while $(B-A)^{\#}=B^{\#}-A^{\#}$ holds if $A {\leq}^{\#} B$. It is now clear that the conditions of Theorem \ref{mainmp} are equivalent to the requirement that $-B {\leq}^* A$ (which therefore implies the identity in the title of this note), whereas the condition $-B {\leq}^{\#} A$ holds if and only if the hypothesis of Theorem \ref{maingrp} hold (which in turn, leads to the group inverse identity). However, the objective of this note is to divest the problem at hand from the notion of matrix partial orders, and also to present an independent and a self-contained treatment. It would be interesting to derive some characterizations for the two identities, studied in this note, to hold. 
\end{rem}

In what follows, we present a class of matrices that satisfy the conditions of Theorem \ref{mainmp} and Theorem \ref{maingrp}. First, we consider Theorem \ref{mainmp}.

\begin{ex}
We give a recursive procedure to compute matrices that satisfy the identity $AB^*+BB^*=0$ and $B^*A+B^*B=0.$ Let $a,b \in \mathbb{C}$ be such that $ab^*+bb^*=0$. Let $A, B \in \mathbb{C}^{2 \times 2}$ be defined by 
$$A=\begin{pmatrix} a & {\alpha}_1 \\ 
{\alpha}_2 & {\alpha}_3 \end{pmatrix} \ \ and  \ \ B=\begin{pmatrix} b & {\beta}_1 \\ 
{\beta}_2 & {\beta}_3 \end{pmatrix},$$
where ${\alpha}_i, {\beta}_i, ~i=1,2,3$ are to be determined. One may verify that $AB^*+BB^*=0$ translates into the following equations: 
\begin{center}
$({\alpha}_1+{\beta}_1){{\beta}_1}^*=0$\\ $(a+b){{\beta}_2}^*+({\alpha}_1+{\beta}_1){{\beta}_3}^*=0$ \\ $({\alpha}_2+{\beta}_2)b^*+({\alpha}_3+{\beta}_3) {{\beta}_1}^*=0$ \\ $({\alpha}_2+{\beta}_2) {{\beta}_2}^*+({\alpha}_3+{\beta}_3){ {\beta}_3}^*=0.$ 
\end{center}
Many choices are available and an easy option leads to the pair of matrices 
\begin{center}
$A=\begin{pmatrix} a & b \\ 
-b & {\alpha}_3 \end{pmatrix}$ \ \ and \ \ $B=\begin{pmatrix} b & -b \\ 
b & -{\alpha}_3 \end{pmatrix},$ 
\end{center}
where ${\alpha}_3$ is arbitrarily chosen. Then $A+B=\begin{pmatrix} a+b & 0 \\
0 & 0 \end{pmatrix}.$ In this case, one may verify that both $AB^*+BB^*=0$ and $B^*A+B^*B=0$. \\
Having constructed the basis step, one may now proceed to construct matrices with one extra row and column, satisfying the required identities, given a pair of matrices of lower order. More specifically, let $A, B \in \mathbb{C}^{m \times n}$ be such that $AB^*+BB^*=0$ and $B^*A+B^*B=0.$ Let $u_1 \in \mathbb{C}^m$ be chosen such that $(A+B)^*u_1=0$ and $v_1 \in \mathbb{C}^n$ be selected so that $(A+B)v_1=0$. Set $$M=\begin{pmatrix} A & u_1 \\
v_1^* & \alpha \end{pmatrix} \\ and ~\\ N=\begin{pmatrix} B & -u_1 \\
-v_1^* & -{\alpha} \end{pmatrix}.$$ Then $M, N \in \mathbb{C}^{(m+1) \times (n+1)}$ and one has $M+N=\begin{pmatrix} A+B & 0 \\
0  & 0 \end{pmatrix}.$ One may verify that $MN^*+NN^*=0$ and $N^*M+N^*N=0.$ There are more general choices for the matrices $M, N$ and we have given just one easy method of determining them.\\
Here is a numerical example: Let $a=1$ and $b=0$ so that $$A=\begin{pmatrix} 1 & 0 \\
0 & 1 \end{pmatrix} \\ and ~\\ B=\begin{pmatrix} 0 & 0 \\
0 & -1 \end{pmatrix}.$$ Then $A^{\dagger}=A^{-1}=I$ and $B^{\dagger}=B$ so that $A^{\dagger}+B^{\dagger}= \begin{pmatrix} 1 & 0 \\ 0 & 0 \end{pmatrix}.$ Also $A+B=\begin{pmatrix} 1 & 0 \\ 0 & 0 \end{pmatrix}$ and so $(A+B)^{\dagger}=A+B= A^{\dagger}+B^{\dagger}$.\\
Now, define $u_1=(0,2)^T$ and $v_1=(0,1)^T$ so that $(A+B)u_1=0=(A+B)^*v_1=0$. Define $$M=\begin{pmatrix} 1 & 0  & 0\\
0 & 1 & 1 \\0 & 2 & \alpha \end{pmatrix} \\ and ~\\ N=\begin{pmatrix} 0 & 0 & 0\\
0 & -1 & -1 \\0 & -2 & -\alpha \end{pmatrix}.$$ Then one may verify that $M^{\dagger}+N^{\dagger}=(M+N)^{\dagger}$.   
\end{ex}

Next, we construct matrices that satisfy the conditions of Theorem \ref{maingrp}. 

\begin{ex}
Let $a,b \in \mathbb{C}$ be such that $ab^{\#}+bb^{\#}=0$. Here $x^{\#}=\frac{1}{x}$, if $x \neq 0$ and $x^{\#}=0$, if $x=0$. Let $A, B \in \mathbb{C}^{2 \times 2}$ be defined by 
$A=\begin{pmatrix} a & {\alpha}_1 \\ 
{\alpha}_2 & {\alpha}_3 \end{pmatrix}$ and $B=\begin{pmatrix} b & {\beta}_1 \\ 
{\beta}_2 & {\beta}_3 \end{pmatrix},$
where ${\alpha}_i, {\beta}_i, ~i=1,2,3$ are to be determined. In order for $AB^{\#}+BB^{\#}=0$ to be satisfied, the said group inverse must exist.\\
Let us start with the case when $b \neq 0$, so that $a+b=0$ (and so $a \neq 0$). Since $B$ is singular, one has ${\beta}_3=\frac{{\beta}_1{\beta}_2}{b}.$ We have the following full rank factorization for $B$: 
$$B=FG, \ \ where \ \ F=\begin{pmatrix} 1\\ {\beta}_2 \end{pmatrix} \ \ and \ \ G=\begin{pmatrix} b & \frac{1}{a}{\beta}_1 \end{pmatrix}.$$  Also, since $B^{\#}$ exists, one must have $GF \neq 0$ and so $b+{\beta}_3\neq0$. Thus, $$B^{\#}=\frac{1}{(b+{\beta}_3)^2} \begin{pmatrix} b & {\beta}_1 \\ 
{\beta}_2 & {\beta}_3 \end{pmatrix}.$$ 
For the requirement $AB^{\#}+BB^{\#}=0$ to be satisfied, one has: 
\begin{center}
$({\alpha}_1+{\beta}_1){\beta}_2=0$ \\ $\frac{{\beta}_1 {\beta}_2}{b}({\alpha}_1+{\beta}_1)=0$ \\
$({\alpha}_2+{\beta}_2)b+({\alpha}_3+{\beta}_3) {\beta}_2=0$ \\
$({\alpha}_2+{\beta}_2){\beta}_1+\frac{{\beta}_1 {\beta}_2}{b}({\alpha}_3+{\beta}_3)=0.$
\end{center}
Here is a choice that leads to a nonzero $A+B$: ${\alpha}_2={\beta}_2=0$ so that ${\beta}_3=0$ and choose ${\alpha}_3=-b-{\alpha}_1$. Then
$B=\begin{pmatrix} b & b \\ 
0 & 0 \end{pmatrix}, A=\begin{pmatrix}  a & {\alpha}_1 \\ 
0  & -b-{\alpha}_1 \end{pmatrix}$ and $A+B=\begin{pmatrix}  a+b & {\alpha}_1+b \\ 
0  & -{\alpha}_1-b \end{pmatrix}.$ One may verify that $B^{\#}A+B^{\#}B=0$ also holds. \\

Next, let us consider the possibility when $b=0$, while $a$ is arbitrary and nonzero. One has $B=\begin{pmatrix} 0 & {\beta}_1 \\ 
{\beta}_2 & {\beta}_3 \end{pmatrix}.$ The possibility that ${\beta}_2={\beta}_3=0$ and ${\beta}_1\neq 0$ is ruled out, since $B$ would be nilpotent and so the group inverse does not exist. One needs to take into account two cases. \\
{\bf Case $(i)$}: ${\beta}_2\neq 0$. Then ${\beta}_1=0$ (otherwise $B$ would be nonsingular). Thus, $$B=FG, \ \ where \ \ F=\begin{pmatrix} 0\\ {\beta}_2 \end{pmatrix} \ \ and \ \ G=\begin{pmatrix} 1 & \frac{{\beta}_3}{2} \end{pmatrix}.$$ One must have $GF ={\beta}_3 \neq 0$ and so $$B^{\#}=\frac{1}{{{\beta}_3}^2} \begin{pmatrix}  0 & 0 \\ 
{\beta}_2 & {\beta}_3 \end{pmatrix}.$$ 
For the requirement $AB^{\#}+BB^{\#}=0$ to be satisfied, the following must hold: 
\begin{center}
${\alpha}_1{\beta}_2=0$ \\ ${\alpha}_1{\beta}_3=0$\\
$({\alpha}_3+{\beta}_3) {\beta}_2=0$ \\
$({\alpha}_3+{\beta}_3){\beta}_3=0.$
\end{center}
Now, since ${\beta}_2\neq 0$, one has ${\alpha}_1=0$ and ${\alpha}_3+{\beta}_3=0$ as well. Thus, one obtains the trivial situation, since $A+B=0$. \\
{\bf Case $(ii)$}: ${\beta}_2=0$. Then ${\beta}_3 \neq 0$. Arguing as above, one has $$B^{\#}=\frac{1}{{{\beta}_3}^2} \begin{pmatrix}  0 & {\beta}_1 \\ 
0 & {\beta}_3 \end{pmatrix}.$$
Again, for $AB^{\#}+BB^{\#}=0$ to hold, one must have the following: 
\begin{center}
$a{\beta}_1+({\alpha}_1+{\beta}_1){\beta}_3=0$\\
${\alpha}_2{\beta}_1 + ({\alpha}_3+{\beta}_3){\beta}_3=0.$
\end{center}
Imposing the condition $B^{\#}(A+B)=0$, in addition, one obtains:
\begin{center}
${\alpha}_2{\beta}_1={\alpha}_2{\beta}_3=0$\\
$({\alpha}_3+{\beta}_3){\beta}_1=({\alpha}_3+{\beta}_3){\beta}_3=0.$
\end{center}
So, ${\alpha}_2=0$ and ${\alpha}_3+{\beta}_3=0$. Assuming that ${\beta}_3 \neq -a$, we may choose 
${\beta}_1=-\frac{{\alpha}_1{\beta}_3}{a+{\beta}_3}$. By taking ${\alpha}_1=-{\beta}_1$, one may verify that all the six equations above hold. 
Thus, one has $A=\begin{pmatrix} a & -{\beta}_1 \\ 
0 & -{\beta}_3 \end{pmatrix}, B=\begin{pmatrix}  0 & {\beta}_1 \\ 
0  & {\beta}_3 \end{pmatrix}$ so that one has a nontrivial expression $A+B=\begin{pmatrix}  a+b & 0\\ 
0  & 0 \end{pmatrix}.$ 

Unlike the case of the Moore-Penrose inverse, there does not appear to be a recursive process to construct matrices satisfying the group inverse identity. Let us present a numerical illustration of the procedure above. Let $a=1$ and $b=0$. Define $A=\begin{pmatrix} 1 & -1 \\ 
0 & -1 \end{pmatrix}, B=\begin{pmatrix}  0 & 1 \\ 
0  & 1 \end{pmatrix}$ so that one has $A+B=\begin{pmatrix}  1 & 0\\ 
0  & 0 \end{pmatrix}.$ Then $(A+B)^{\#}=A+B$. Also, $$A^{\#}+B^{\#}=A^{-1}+B^{\#}=\begin{pmatrix}  1 & -1\\ 
0  & -1 \end{pmatrix} + \begin{pmatrix}  0 & 1\\ 
0  & 1 \end{pmatrix}=\begin{pmatrix}  1 & 0\\ 
0  & 0 \end{pmatrix}.$$
\end{ex}

\section{Concluding remarks}
We point to some directions for further study. We have considered the question of when a generalized inverse of a sum of two matrices equals the sum of their generalized inverses. As mentioned earlier, the problem of determining necessary and sufficient conditions for the two identities to hold, remains open. Next, one might be interested in asking the same question for sums involving three or more matrices. Apparently, an answer to that can turn out to be quite complex, considering the effort involved for the case of two matrices, as presented here. The second quest may be towards proving similar formulae for other clasical, as well new classes of generalized inverses, like the Drazin inverse, the core inverse or the Drazin-Moore-Penrose inverse. A third direction is to address the problem of determining when the studied identities hold, for elements in a ring. It is noteworthy that all the proofs presented here are linear algebraic and are free of multilinear notions like the rank or the determinant. Hence, with a little modification, they may be extended to the case of infinite dimensional spaces.

\end{document}